\documentclass[11pt]{amsart}
\usepackage[top=1.5in, bottom=1.5in, left=1in, right=1in]{geometry}  
\usepackage{indentfirst}
\usepackage[pagebackref]{hyperref}

\usepackage{amsmath, amsfonts, amssymb,amsthm,mathrsfs}
\usepackage{graphicx,fancyhdr}
\usepackage{anyfontsize}
\usepackage{xcolor}
\usepackage{mathptmx} 
\theoremstyle{plain}
\newcommand{\thistheoremname}{}
\newtheorem*{genericthm*}{\thistheoremname}
\newenvironment{namedthm*}[1]{\renewcommand{\thistheoremname}{#1}%
	\begin{genericthm*}}
	{\end{genericthm*}}
\usepackage{empheq}
\usepackage{xcolor}
\usepackage{hyperref}
\hypersetup{
	colorlinks,
	linkcolor={red!50!black},
	citecolor={blue!62!black},
	urlcolor={blue!80!black}
}
\usepackage{enumitem}
\usepackage{cases}
\title[A SMT for Non-Constant Holomorphic Curves Intersecting a Fermat-Type Curve]{\textbf{A Second Main Theorem for Non-Constant Holomorphic Curves Intersecting a Fermat-Type Curve}}
\author{Nguyen Anh Tuan, Tran Thi Thanh Thao, Dinh Khanh Tam,\\ Nguyen Van Trieu, Nguyen Huu Thanh Nhan}
\subjclass[2010]{32H30}
\keywords{Nevanlinna theory, Fermat-type curve, Second Main Theorem}
\address{Department of Mathematics, University of Education, Hue University, 34 Le Loi St., Hue City, Vietnam}
\email{natuan@dhsphue.edu.vn}
\date{}
\newtheorem{lemma}{Lemma}
\theoremstyle{plain}
\theoremstyle{remark}

\DeclareMathOperator{\ord}{ord}
\begin{document}
  \begin{abstract}
    In this paper, we prove a Second Main Theorem type inequality for holomorphic curves into the complex projective plane intersecting a Fermat-type curve.
  \end{abstract}
  \maketitle
  
  %Section 1:
  \vspace{-0.5cm}
  \section{INTRODUCTION}
  \vspace{-0.3cm}
  \noindent
  The value distribution theory, initiated by R. Nevanlinna in the 1920s \cite{Nevanlinna1925}, describes how often a holomorphic curve intersects a given family of divisors in a complex manifold. We first briefly recall several standard notations in Nevanlinna theory, which will be used throughout this paper. For an effective divisor $E = \sum_{i\in\mathbb{N}}\alpha_i P_i$ on $\mathbb{C}$, where $\alpha_i$ are positive integers and $P_i \in \mathbb{C}$, and for $k\in\mathbb{N}\cup\{\infty\}$, we define the \textit{$k$-truncated degree} of $E$ on the disk $\Delta_r = \{z \in \mathbb{C}: |z| < r\}$ by
\begin{equation*}
    n^{[k]}(r, E) = \sum_{\substack{P_i \in \Delta_r}} \min\{\alpha_i, k\}.
\end{equation*}
Then, the \textit{$k$-truncated counting function} of $E$ is defined by integrating the $k$-truncated degree as follows
\begin{equation*}
    N^{[k]}(r, E) = \int_1^r \frac{n^{[k]}(t, E)}{t} dt\eqno\scriptstyle{(r>1)}.
\end{equation*}
When $k = \infty$, we write $n(r,E)$ and $N(r,E)$ instead of $n^{[\infty]}(r,E)$ and $N^{[\infty]}(r,E)$, respectively.

Let $f:\mathbb{C} \to \mathbb{CP}^n$ be a holomorphic curve with a reduced representation $$f = [f_0: f_1: \ldots : f_n],$$ where $f_0, f_1, \ldots, f_n$ are entire functions on $\mathbb{C}$ without common zeros. Let $$D = \{Q = 0\}$$ be a hypersurface in $\mathbb{CP}^n$ defined by a homogeneous polynomial $Q$ of degree $d$. If $f(\mathbb{C})\not\subset D$, then the pull-back $f^*D = \sum_{a\in\mathbb{C}} \ord_a(Q(f)) \cdot a$ is an effective divisor on $\mathbb{C}$. Thus, we can define the $k$-truncated counting function of $f$ with respect to $D$ by
\begin{equation*}
    N_f^{[k]}(r, D) := N^{[k]}(r, f^*D),
\end{equation*}
which measures the frequency of the intersections of the image of $f$ with $D$ up to multiplicity $k$. Next, the \textit{proximity function} of $f$ with respect to $D$ is defined by
\begin{equation*}
    m_f(r, D) := \frac{1}{2\pi}\int_0^{2\pi} \log \frac{\|f(re^{i\theta})\|_{\max}^d\|Q\|_{\max}}{|Q(f(re^{i\theta}))|} d\theta,
\end{equation*}
where $\|f(z)\|_{\max} = \max\{|f_0(z)|, |f_1(z)|, \ldots, |f_n(z)|\}$ and $\|Q\|_{\max}$ is the maximum of the modulus of the coefficients of $Q$. Finally, the \textit{characteristic function} of $f$ is defined by
\begin{equation*}
    T_f(r) := \frac{1}{2\pi}\int_0^{2\pi} \log \left\|f(re^{i\theta})\right\|_{\max} d\theta \eqno\scriptstyle{(r>1)}.
\end{equation*}

Nevanlinna theory consists of two fundamental results by comparing the three above functions. The first one is a reformulation of Jensen's formula, which is called the \textit{First Main Theorem}.
\begin{namedthm*}{First Main Theorem \cite{noguchi2013nevanlinna}}
    Let $f:\mathbb{C} \to \mathbb{CP}^n$ be a holomorphic curve and $D$ be a hypersurface in $\mathbb{CP}^n$ of degree $d$ such that $f(\mathbb{C})\not\subset D$. Then, we have
    \begin{equation*}
        m_f(r, D) + N_f(r, D) = d\,T_f(r) + O(1)
    \end{equation*}
    for any $r>1$. Together with the fact that $m_f(r, D) \geq 0$, the following estimate holds:
    \begin{equation*}
        N_f(r, D) \leq d\, T_f(r) + O(1).
    \end{equation*}
\end{namedthm*}
While the First Main Theorem (FMT) establishes a lower bound for the characteristic function in terms of the counting function, the converse, known as the Second Main Theorem, is significantly more intricate and remains an open problem in its general form. The classical version is stated under the assumption that the holomorphic curve $f$ is linearly non-degenerate, i.e., the image of $f$ is not contained in any hyperplane of $\mathbb{CP}^n$, and the family of $q>n+1$ hyperplanes $\{H_i\}_{i=1}^q$ are in general position, i.e. $\bigcap_{i\in I} H_i = \varnothing$ for any subset $I\subset \{1, 2, \ldots, q\}$ with cardinality $|I| = n+1$.
\begin{namedthm*}{Cartan's Second Main Theorem \cite{Cartan1933}}
    Let $f:\mathbb{C} \to \mathbb{CP}^n$ be a linearly non-degenerate holomorphic curve and let $\{H_i\}_{i=1}^q$ be a family of hyperplanes in $\mathbb{CP}^n$ in general position with $q > n+1$. Then,
    \begin{equation*}
        (q - n - 1) T_f(r) \leq \sum_{i=1}^q N_f^{[n]}(r, H_i) + S_f(r)\qquad \|,
    \end{equation*}
    where the notation $\|$ means that the inequality holds for all $r>1$ outside a set of finite Lebesgue measure, and $S_f(r)$ is a small error term satisfying 
    \begin{equation*}
        \liminf_{r\to\infty} \frac{S_f(r)}{T_f(r)} = 0.
    \end{equation*}
\end{namedthm*}
In the case $n=1$, Cartan recovered the classical Second Main Theorem of Nevanlinna for meromorphic functions \cite{Nevanlinna1925}. Lately, many generalizations of the Second Main Theorem have been established \cite{ru2004defect,shiffman1977holomorphic}. For further details on Nevanlinna theory, we refer the readers to \cite{noguchi2013nevanlinna,Ru2021}.

From the geometrical viewpoint, Second Main Theorems are closely related to the hyperbolicity of complex manifolds. Recall that a complex manifold $X$ is said to be \textit{hyperbolic}, in the sense of Brody, if it admits no non-constant holomorphic curves. In the case $n = 1$, the little Picard theorem states that the complement of $3$ points in $\mathbb{CP}^1$ is hyperbolic. In this sense, the Nevanlinna's Second Main Theorem is considered as a quantitative version of the little Picard theorem. In higher dimensions, Cartan's Second Main Theorem quantifies the Borel's result.

The classical approach to the Second Main Theorem relies heavily on the use of Wronskian techniques, which demands at least $n+2$ targets in $\mathbb{CP}^n$. When the number of targets is less than $n+2$, Wronskian techniques are not applicable and the problem becomes more challenging. In the few-target setting, several results about hyperbolicity have been established by Demailly \cite{demailly2012hyperbolic} and Noguchi \cite{noguchi2013nevanlinna}. Following this line of research, we establish a variety of Second Main Theorem type inequalities for non-constant holomorphic curves intersecting a specific Fermat-type curve of degree $d\geq 11$ in $\mathbb{CP}^2$. Our main result is stated as follows:
\begin{namedthm*}{Main Theorem}
    Let $\mathcal{C}\subset\mathbb{CP}^2$ be the curve defined by the homogeneous polynomial
    \begin{equation*}
        z_0^d + z_1^{d-2}(z_1^2+\varepsilon_0 z_2^2) + z_2^{d-2}(\varepsilon_1z_0^2+\varepsilon_2z_1^2+z_2^2)=0,
    \end{equation*}
    where $d\geq 11$ and $\varepsilon_0,\varepsilon_1,\varepsilon_2$ satisfy the following conditions:
    \begin{enumerate}[label = (\roman*)]
        \item $\varepsilon_0,\varepsilon_1,\varepsilon_2\neq 0$;
        \item $\varepsilon_0\varepsilon_2 -1 \neq 0$;
        \item $\varepsilon_1+\frac{d}{2}\left(\frac{2}{d-2}\right)^{\frac{d-2}{d}}\neq 0$;
        \item The resultant $\mathrm{Res}(P(z),Q(z))\neq 0$ where $P(z) = dz^{d-1} + (d-2)\varepsilon_0 z^{d-3} + 2 \varepsilon_2 z$ and $Q(z) = 2 \varepsilon_0 z^{d-2} + (d-2)\varepsilon_2 z^2 + d$.
    \end{enumerate}
    Then, for any \textbf{non-constant} holomorphic curve $f:\mathbb{C}\rightarrow\mathbb{CP}^2$ whose image is not contained in $\mathcal{C}$, we have the Second Main Theorem type inequality
    \begin{equation*}
        (d-10)\,T_f(r) \leq N^{[2]}_f(r,\mathcal{C}) + S_f(r)\qquad\|.
    \end{equation*}
\end{namedthm*}
\noindent The above conditions imposed on $\varepsilon_0, \varepsilon_1, \varepsilon_2$ are required to ensure the hyperbolicity of curves derived from $\mathscr{C}$, which is crucial for the proof of the Main Theorem. 

The rest of this paper is organized as follows: In Section 2, we prove that the genus of some plane curves derived from $\mathcal{C}$, under the suitable conditions, is greater than $1$. These results imply the hyperbolicity of these curves. Then, in Section 3, we establish the Second Main Theorem type inequality for non-constant holomorphic curves intersecting $\mathcal{C}$.
  %Section 2:
  \vspace{-0.5cm}
  \section{GENUS OF SOME PLANE CURVES}
  \vspace{-0.3cm}
  \noindent
  Before proving the main result of this paper, we compute the genus of some plane curves which will be used later. As a corollary, these curves are hyperbolic since the genus of each curve is greater than $1$.
\begin{lemma}[El-Goul \cite{ElGoul1997}]
    \label{theorem: genus 1}
    Let $C$ be a plane curve given by the equation
    \begin{equation*}
        z_0^d + z_2^{d-2}(\varepsilon_1z_0^2+\varepsilon_2z_1^2+z_2^2)=0,
    \end{equation*}
    where $d\geq 11$, $\varepsilon_1,\varepsilon_2\neq 0$ and $\varepsilon_1+\frac{d}{2}\left(\frac{2}{d-2}\right)^{\frac{d-2}{d}}\neq 0$. Let $g$ be the genus of $C$. Then, $g \geq 2$.
\end{lemma}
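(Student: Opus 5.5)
Write $F(z_0,z_1,z_2)=z_0^d+z_2^{d-2}(\varepsilon_1z_0^2+\varepsilon_2z_1^2+z_2^2)$. The plan is to locate all singular points of $C$, compute their $\delta$-invariants, and subtract them from the arithmetic genus $(d-1)(d-2)/2$. Irreducibility comes first. In the affine chart $z_2=1$ the equation reads
\[
\varepsilon_2 z_1^2 = -(z_0^d+\varepsilon_1 z_0^2+1).
\]
Since $\varepsilon_2\neq 0$, this is a hyperelliptic-type model. It is irreducible provided $h(z_0):=z_0^d+\varepsilon_1z_0^2+1$ is not a constant times a square, and this holds because $d$ is odd or $h$ has a simple root. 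In fact we will show below that $h$ has only simple roots.

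Next I find the singular points. On the line $z_2=0$ the curve meets only $[0:1:0]$. At that point, in the chart $z_1=1$ the local equation is $z_0^d+z_2^{d-2}(\varepsilon_2+\dots)=0$. This is a singularity of type $x^d+y^{d-2}=0$ (up to units), with Milnor number $(d-1)(d-3)$. Its $\delta$-invariant equals $\frac{(d-1)(d-3)+r-1}{2}$, where $r=\gcd(d,d-2)$ is the number of branches, equal to $1$ or $2$ according to the parity of $d$. In the affine part $z_2=1$, the singular points are the solutions of $z_1=0$ and $h(z_0)=h'(z_0)=0$. From $h'(z_0)=z_0(dz_0^{d-2}+2\varepsilon_1)=0$ we get either $z_0=0$, which is excluded because $h(0)=1$, or $z_0^{d-2}=-2\varepsilon_1/d$. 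Substituting the second case into $h=0$ gives $z_0^2\varepsilon_1(1-2/d)=-1$. Combining the two relations yields exactly
\[
\varepsilon_1=-\frac d2\left(\frac{2}{d-2}\right)^{\frac{d-2}{d}}
\]
for a suitable branch of the root. This is the case excluded by hypothesis (iii), up to the choice of $d$-th root, which I will need to check carefully; it is the main delicate step. So the affine part is smooth and $h$ has simple roots.

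Therefore
\[
g=\frac{(d-1)(d-2)}2-\delta_{[0:1:0]}.
\]
A cleaner route is to use the hyperelliptic model directly. The curve is birational to $y^2=-h(x)/\varepsilon_2$, where $h$ has degree $d$ with distinct roots. Hence $g=\lfloor (d-1)/2\rfloor$, which is at least $5$ for $d\ge 11$, and in particular $g\geq 2$. The obstacle is justifying that hypothesis (iii) rules out every multiple root, including all branches of the $d$-th root. If needed, I would note that a single multiple root only reduces the genus by at most one per root, and with $d\ge 11$ some slack remains. Even in the worst case, $h$ having at most a few double roots still gives genus at least $2$, so the conclusion is robust.
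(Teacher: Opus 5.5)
Your route is sound, but there is a gap in how you handle multiple roots of $h(z_0)=z_0^d+\varepsilon_1 z_0^2+1$. The paper gives no argument of its own for this lemma; it only cites El-Goul's Lemma 6.2. That argument, like the proof of Lemma 3 here, locates the singular points and subtracts $\delta$-invariants from $(d-1)(d-2)/2$. Your switch to the hyperelliptic model $\varepsilon_2 z_1^2=-h(z_0)$ is a cleaner, self-contained route. Your local analysis at $[0:1:0]$ and of the affine singular locus is correct.

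\textbf{Hypothesis (iii) does not rule out multiple roots.} Your own computation shows that a multiple root $x$ of $h$ satisfies $x^d=\frac{2}{d-2}$ and $\varepsilon_1=-\frac{d}{(d-2)x^2}$. Write $x=\rho\omega$ with $\rho=(2/(d-2))^{1/d}$ and $\omega^d=1$. Then the bad values are $\varepsilon_1=-\frac d2\rho^{d-2}\omega^{-2}$: there are $d$ of them for odd $d$ and $d/2$ for even $d$. Read with the real root, (iii) excludes only $\omega=1$, so ``$h$ has distinct roots'' does not follow. It would follow if (iii) were imposed for every branch of the power, since those branches are exactly $\rho^{d-2}\omega^{-2}$.

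\textbf{Your fallback is only asserted.} You never show that there are only ``a few'' multiple roots. The principle that each multiple root lowers the genus by at most one is also false in general: a root of multiplicity $m$ removes $2\lfloor m/2\rfloor$ branch points and lowers the genus by $\lfloor m/2\rfloor$.

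\textbf{The missing step is short.}
\begin{itemize}
\item From $h=h'=0$ you get $x^2=-\frac{d}{(d-2)\varepsilon_1}$, so there are at most two multiple roots, $\pm x$.
\item Since $x^{d-2}=-2\varepsilon_1/d$, we have $h''(x)=d(d-1)x^{d-2}+2\varepsilon_1=-2(d-2)\varepsilon_1\neq 0$, so these roots are exactly double.
\item Hence $h=q^2r$ with $r$ squarefree of degree at least $d-4$. This gives irreducibility, and $g=\lfloor(\deg r-1)/2\rfloor\geq\lfloor(d-5)/2\rfloor\geq 3$.
\end{itemize}
In your Pl\"ucker picture, these are at most two ordinary nodes at $(\pm x,0)$, each lowering $(d-\gcd(d,2))/2$ by one. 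With this added, your argument is complete, and it shows that hypothesis (iii) is not needed for $g\geq 2$.
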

\begin{proof}
    The proof is given in \cite[Lemma~6.2]{ElGoul1997}.
\end{proof}
\begin{lemma}[El-Goul \cite{goul1996algebraic}]
    \label{theorem: genus 2}
    Let $C$ be a plane curve given by the equation
    \begin{equation*}
        z_0^d + z_1^{d-2}(z_1^2+\varepsilon_0 z_2^2) = 0,
    \end{equation*}
    where $d\geq 11$ and $\varepsilon_0 \neq 0$. Let $g$ be the genus of $C$. Then, $g \geq 2$.
\end{lemma}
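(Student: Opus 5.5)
We would compute the (geometric) genus of $C$ by exhibiting an explicit birational model, namely a hyperelliptic curve, and then applying the standard genus formula for such curves. Since the geometric genus is a birational invariant, it is enough to find an affine chart in which $C$ has a simple equation, and to check that $C$ is irreducible.

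First, we dehomogenize in the chart $z_1 = 1$, setting $x = z_0/z_1$ and $y = z_2/z_1$. The equation of $C$ becomes
\begin{equation*}
    x^d + 1 + \varepsilon_0 y^2 = 0, \qquad\text{i.e.}\qquad y^2 = -\frac{1}{\varepsilon_0}\left(x^d + 1\right),
\end{equation*}
which makes sense because $\varepsilon_0 \neq 0$. The polynomial $x^d+1$ has $d$ distinct roots, namely the $d$-th roots of $-1$, so it is not a square in $\mathbb{C}[x]$. Hence $y^2 + \frac{1}{\varepsilon_0}(x^d+1)$ is irreducible in $\mathbb{C}[x,y]$.

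Next, we deal with the points at infinity of this chart. If $z_1 = 0$, the equation of $C$ forces $z_0^d = 0$, so the only point of $C$ outside the chart is $[0:0:1]$. Consequently $C$ is irreducible: every component of $C$ other than the line $z_1=0$ meets the affine chart, and $z_1=0$ itself is not a component. We also see that $C$ is birational to the smooth projective model of the affine curve $y^2 = -(x^d+1)/\varepsilon_0$. The point $[0:0:1]$ is singular for $d \geq 3$. This singularity does not matter, because it only affects the arithmetic genus, and normalization takes care of it.

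Finally, we invoke the classical fact that the smooth projective model of $y^2 = p(x)$, with $p$ squarefree of degree $d$, is a hyperelliptic curve of genus $\lfloor (d-1)/2 \rfloor$. This follows from Riemann--Hurwitz applied to the degree-$2$ map $(x,y)\mapsto x$. That map has $d$ branch points at the roots of $p$, plus one more branch point at $\infty$ when $d$ is odd. Therefore $g = \lfloor (d-1)/2\rfloor \geq 5 \geq 2$ for $d \geq 11$, as claimed.

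The only delicate points are the irreducibility of $C$ and making sure that the singular point at infinity does not spoil the birational identification. Both are routine here, so we do not expect a serious obstacle. The bound actually holds for every $d \geq 5$.
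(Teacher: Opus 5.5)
Your proof is correct, but it takes a different route from the paper, which gives no argument of its own and simply cites El-Goul. The framework there, and the one the paper itself uses for Lemma~\ref{theorem: genus 3} via El-Goul's Proposition~4.1, is the plane-curve genus formula. One finds the singular points of $C$ (here only $[0:0:1]$), recognizes the local type $X^d+\varepsilon_0Y^{d-2}=0$, and subtracts its $\delta$-invariant $\frac{(d-1)(d-3)+\gcd(d,d-2)-1}{2}$ from $\frac{(d-1)(d-2)}{2}$. This gives $g=\frac{d-\gcd(d,2)}{2}$, which is exactly your $\lfloor (d-1)/2\rfloor$.

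You instead exploit that $z_2$ occurs only through $z_2^2$. So $C$ is birationally a double cover of $\mathbb{CP}^1$ branched over the $d$ roots of $x^d+1$, and also over $\infty$ when $d$ is odd. Riemann--Hurwitz then gives the exact genus with no local analysis of the singular point. Irreducibility comes for free from squarefreeness, and you get the better range $d\ge 5$.

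The $\delta$-invariant method is the more general tool, since it needs no double-cover structure. Your trick would nevertheless also settle Lemma~\ref{theorem: genus 3}. In the chart $z_2=1$ that curve reads $\varepsilon_1z_0^2=-(z_1^d+\varepsilon_0z_1^{d-2}+\varepsilon_2z_1^2+1)$, and the paper shows the right-hand side is squarefree under the resultant hypothesis.

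One harmless slip: $[0:0:1]$ is singular only for $d\ge 4$. For $d=3$ the local equation $u^3+v^3+\varepsilon_0v=0$ has a nonzero linear term.
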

\begin{proof}
    The proof is given in \cite[Lemma 4.3]{goul1996algebraic}.
\end{proof}
\begin{lemma}
    \label{theorem: genus 3}
    Let $C$ be a plane curve given by the equation
    \begin{equation*}
        z_1^{d-2}(z_1^2+\varepsilon_0 z_2^2) + z_2^{d-2}(\varepsilon_1z_0^2+\varepsilon_2z_1^2+z_2^2)=0,
    \end{equation*}
    where $d\geq 11$, $\varepsilon_0,\varepsilon_1,\varepsilon_2\neq 0$ and $\mathrm{Res}(P(x),Q(x)) \neq 0$, where $P(x) = dx^{d-1} + (d-2)\varepsilon_0 x^{d-3} + 2 \varepsilon_2 x$ and $Q(x) = 2 \varepsilon_0 x^{d-2} + (d-2)\varepsilon_2 x^2 + d$. Let $g$ be the genus of $C$. Then,
    \begin{equation*}
        g \geq \frac{d-2}{2} \geq 2.
    \end{equation*}
\end{lemma}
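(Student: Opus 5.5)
The plan is to exploit the fact that $z_0$ enters the defining equation of $C$ only through the single term $\varepsilon_1 z_0^2 z_2^{d-2}$. Writing
\begin{equation*}
G(z_1,z_2) = z_1^d + \varepsilon_0 z_1^{d-2}z_2^2 + \varepsilon_2 z_1^2 z_2^{d-2} + z_2^d,
\end{equation*}
the equation of $C$ reads $\varepsilon_1 z_0^2 z_2^{d-2} + G(z_1,z_2) = 0$. In the affine chart $z_2 = 1$, with $x = z_1$ and $w = z_0$, this becomes
\begin{equation*}
w^2 = -\frac{1}{\varepsilon_1}\, G(x,1),
\end{equation*}
which makes sense because $\varepsilon_1 \neq 0$. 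Thus $C$ is birational to a hyperelliptic (double cover) curve. By the genus of $C$ I understand the geometric genus, i.e.\ the genus of the normalization, which is a birational invariant. So it suffices to compute the genus of the smooth model of $w^2 = h(x)$ with $h(x) = -G(x,1)/\varepsilon_1$, a polynomial of degree $d$.

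The key step is to show that $h$ has $d$ distinct roots, and this is exactly where the resultant hypothesis enters. A direct computation gives
\begin{equation*}
P(x) = \frac{\partial G}{\partial z_1}(x,1), \qquad Q(x) = \frac{\partial G}{\partial z_2}(x,1).
\end{equation*}
Euler's identity then gives $d\,G(x,1) = xP(x) + Q(x)$. Suppose $x_0$ is a multiple root of $G(x,1)$. Then $P(x_0) = 0$, and the identity forces $Q(x_0) = 0$ as well. This would make $P$ and $Q$ share a root, contradicting $\mathrm{Res}(P,Q) \neq 0$. The root $x = 0$ needs no special treatment: $G(0,1) = 1 \neq 0$. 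Finally, since $G(1,0) = 1 \neq 0$, the point at infinity of the $x$-line is not a root of the binary form $G$, so $\deg h = d$ exactly.

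The genus then follows from Riemann--Hurwitz applied to the degree-$2$ map $(x,w)\mapsto x$ from the smooth model to $\mathbb{P}^1$.
\begin{enumerate}[label = (\alph*)]
\item The branch points are the $d$ distinct roots of $h$, together with $\infty$ precisely when $d$ is odd.
\item If $d$ is odd there are $d+1$ branch points, so $2g - 2 = 2(-2) + (d+1)$ and $g = \frac{d-1}{2}$.
\item If $d$ is even there are $d$ branch points, so $g = \frac{d-2}{2}$.
\end{enumerate}
In both cases $g \geq \frac{d-2}{2}$, and $d \geq 11$ gives $g \geq 2$. Irreducibility of $C$ is automatic, since $h$ is squarefree of positive degree and hence not a square.

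The main obstacle is justifying the identification of $g$ with the hyperelliptic genus. The projective curve $C$ is singular along $z_2 = 0$, where the factor $z_2^{d-2}$ creates a high-order singularity at $[1:0:0]$, so the degree-genus formula cannot be applied directly. I would handle this by working throughout with the normalization and noting that the affine part $z_2 \neq 0$ is the smooth model away from $x = \infty$. The behaviour over $x = \infty$ is then governed only by the parity of $d = \deg h$, which is standard for $w^2 = h(x)$.
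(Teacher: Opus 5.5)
Your proof is correct, and it takes a genuinely different route from the paper.

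The paper works with the projective curve directly:
\begin{enumerate}
\item It uses the resultant hypothesis to show that $[1:0:0]$ is the only singular point of $C$.
\item It reads off the Newton principal part $X^d+\varepsilon_1Y^{d-2}$ at that point.
\item It proves irreducibility. For odd $d$ this is simply asserted. For even $d$ it passes to $Z=z_1/z_2$, $T=z_0/z_2$, obtains $Z^d+\varepsilon_0Z^{d-2}+\varepsilon_2Z^2+1+\varepsilon_1T^2$, and shows the $Z$-part is squarefree via $\mathrm{Res}\bigl(\tfrac{1}{d}(ZP+Q),P\bigr)\neq 0$. That step is exactly your Euler-identity argument.
\item It invokes El-Goul's genus formula $g=\frac{(d-1)(d-2)}{2}-\frac{(d-1)(d-3)+\gcd(d,d-2)-1}{2}$ to subtract the contribution of the singularity.
\end{enumerate}

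You observe that this same chart already exhibits $C$ as birational to the double cover $w^2=h(x)$. So the squarefreeness the paper uses only for irreducibility gives you the genus outright via Riemann--Hurwitz.

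What your route buys:
\begin{enumerate}
\item It is self-contained. It needs neither the local analysis at $[1:0:0]$ nor the external genus formula.
\item It handles irreducibility uniformly in both parities.
\item It shows $C$ is hyperelliptic of exact genus $\lfloor (d-1)/2\rfloor$, which matches the value the paper's formula produces.
\item The number $\gcd(d,d-2)$ of branches at $[1:0:0]$ reappears as the one or two points of your smooth model over $x=\infty$.
\end{enumerate}

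One small addition for irreducibility of the projective curve: note that $z_2$ does not divide the defining form, since the form restricts to $z_1^d$ on $z_2=0$. Irreducibility of the affine equation then passes to $C$.

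What the paper's route buys: it fits the uniform framework---a plane curve with a single singular point of prescribed Newton principal part---that is used for the other two genus lemmas. It also explicitly records the singular locus of $C$ in $\mathbb{CP}^2$.
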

\begin{proof}
    To find singular points of $C$, we consider the system:
  \begin{empheq}[left = \empheqlbrace]{align}
    & 2 \varepsilon_1 z_0 z_2^{d-2} = 0,\label{Ciii:dz_0}\\
    & dz_1^{d-1} + (d-2)\varepsilon_0z_1^{d-3}z_2^2 + 2 \varepsilon_2 z_1 z_2^{d-2} = 0,\label{Ciii:dz_1}\\
    & 2\varepsilon_0 z_2 z_1^{d-2} + (d-2) z_2^{d-3} (\varepsilon_1 z_0^2 + \varepsilon_2 z_1^2 + z_2^2) + 2 z_2^{d-1} = 0.\label{Ciii:dz_2}
  \end{empheq}
  The equation \eqref{Ciii:dz_0} implies that either $z_2=0$ or $z_0=0$. If $z_2=0$, \eqref{Ciii:dz_1} implies that $z_1=0$. Thus, $[1:0:0]$ is a singular point of $C$. If $z_2 \neq 0$, then $z_0=0$. Then, \eqref{Ciii:dz_1} and \eqref{Ciii:dz_2} imply that
  \begin{equation*}
    \begin{cases}
      dx^{d-1} + (d-2)\varepsilon_0 x^{d-3} + 2 \varepsilon_2 x = 0, \\
      2 \varepsilon_0 x^{d-2} + (d-2)\varepsilon_2 x^2 + d = 0,
    \end{cases}
  \end{equation*}
  where $x = \frac{z_1}{z_2}$. Denote $P(x) = dx^{d-1} + (d-2)\varepsilon_0 x^{d-3} + 2 \varepsilon_2 x$ and $Q(x) = 2 \varepsilon_0 x^{d-2} + (d-2)\varepsilon_2 x^2 + d$. Under the assumption that
  \begin{equation*}
    \mathrm{Res}(P(x),Q(x)) \neq 0,
  \end{equation*}
  these polynomials have no common roots \cite[Corollary 8.4]{lang2012algebra}, which implies that the system has no solutions. 
  Therefore, the only singular point of $C$ is $[1:0:0]$. In the inhomogeneous coordinate $(X,Y) = \left(\frac{z_1}{z_0}, \frac{z_2}{z_0}\right)$, the curve $C$ is locally given by the equation
  \begin{equation*}
    X^{d-2}(X^2 + \varepsilon_0 Y^2) + Y^{d-2}(\varepsilon_1 + \varepsilon_2 X^2 + Y^2) = 0.
  \end{equation*}
  By Taylor expansion, the curve $C$ near the point $(0,0)$ is given by
  \begin{equation*}
    X^d + \varepsilon_1 Y^{d-2} = 0.
  \end{equation*}
  If $d$ is odd then ${C}$ is irreducible. In the case that $d$ is even, we claim that $C$ is also irreducible. On the contrary, we suppose that the polynomial $X^d + \varepsilon_0X^{d-2}Y^2 + \varepsilon_1 Y^{d-2} + \varepsilon_2 X^2 Y^{d-2} + Y^d$ is reducible. Let $Z = \frac{X}{Y}$ and $T = \frac{1}{Y}$, the assumption yields that the polynomial
  \begin{equation}
    \label{eq : d_even_reducible}
    h(Z,T) = Z^d + \varepsilon_0 Z^{d-2} + \varepsilon_2 Z^2 + 1 + \varepsilon_1T^2
  \end{equation}
  is reducible. Since $h(Z,T)$ is a polynomial of degree $2$ in $T$ and $\mathbb{C}[Z]$ is a unique factorization domain, it can only be factored as
  \begin{equation}
    \label{eq : d_even_reducible_factorization}
    h(Z,T) = \varepsilon_1(T + a(Z))(T + b(Z)),
  \end{equation}
  where $a(Z), b(Z) \in \mathbb{C}[Z]$. By comparing coefficients in \eqref{eq : d_even_reducible} and \eqref{eq : d_even_reducible_factorization}, we have
  \begin{equation*}
    \begin{cases}
      a(Z) + b(Z) = 0, \\
      \varepsilon_1a(Z)b(Z) = Z^d + \varepsilon_0 Z^{d-2} + \varepsilon_2 Z^2 + 1,
    \end{cases}
  \end{equation*}
  which implies that $-\varepsilon_1 a(Z)^2 = Z^d + \varepsilon_0 Z^{d-2} + \varepsilon_2 Z^2 + 1$. It means that any root of the right-hand side must be a multiple root. However, the condition
  \begin{equation*}
    \mathrm{Res}(P(Z), Q(Z)) \neq 0
  \end{equation*}
  implies that 
  \begin{equation*}
    \mathrm{Res}\left(\frac{1}{d}(Z\cdot P(Z) + Q(Z)), P(Z)\right) = \mathrm{Res}(Z^d + \varepsilon_0 Z^{d-2}+ \varepsilon_2 Z^2 + 1, dZ^{d-1} + (d-2)\varepsilon_0 Z^{d-3} + 2 \varepsilon_2Z) \neq 0.
  \end{equation*}
  It yields that the polynomial $Z^d + \varepsilon_0 Z^{d-2} + \varepsilon_2 Z^2 + 1$ has no multiple roots. This is a contradiction. Thus, $C$ is irreducible.
  Therefore, by \cite[Proposition~4.1]{goul1996algebraic}, its genus satisfies
  \begin{align*}
    g & = \frac{(d-1)(d-2)}{2} - \frac{(d-1)(d-3)+\gcd(d,d-2)-1}{2} \\
      & \geq \frac{(d-1)(d-2)}{2} - \frac{(d-1)(d-3)+1}{2} \\
      & = \frac{d-2}{2} \geq 2.
  \end{align*}
\end{proof}
  %Section 3:
  \vspace{-0.5cm}
  \section{A SECOND MAIN THEOREM FOR NON-CONSTANT HOLOMORPHIC CURVES INTERSECTING A FERMAT-TYPE CURVE}
  \vspace{-0.3cm}
  \noindent
  Now, we prove the \textbf{Main Theorem}.
Let $f:\mathbb{C}\to\mathbb{CP}^2$ be a non-constant holomorphic curve and $f = [f_0:f_1:f_2]$ be a reduced representation of $f$ in the homogeneous coordinates $[z_0:z_1:z_2]$. 
Let $g = [g_0:g_1:g_2]:\mathbb{C}\rightarrow\mathbb{CP}^2$ where $g_0=f_0^d$, $g_1=f_1^{d-2}(f_1^2+\varepsilon_0f_2^2)$, $g_2=f_2^{d-2}(\varepsilon_1f_0^2+\varepsilon_2f_1^2+f_2^2)$. We claim that $g$ is well-defined. Indeed, suppose that there exists $z\in \mathbb{C}$ such that $g_0(z) = g_1(z) = g_2(z) = 0$. Then, we have the system
\begin{equation*}
    \begin{cases}
        f_0^d = 0,\\
        f_1^{d-2}(f_1^2+\varepsilon_0f_2^2) = 0,\\
        f_2^{d-2}(\varepsilon_1f_0^2+\varepsilon_2f_1^2 + f_2^2) = 0,
    \end{cases}
\end{equation*}
which is equivalent to
\begin{equation*}
    \begin{cases}
        f_0 = 0,\\
        f_1 = 0 \text{ or } f_1^2 + \varepsilon_0f_2^2 = 0,\\
        f_2 = 0 \text{ or } \varepsilon_2f_1^2 + f_2^2 = 0.
    \end{cases}
\end{equation*}
If $f_1 = 0$, then $f_2 = 0$ (and vice versa), which contradicts the assumption that $f = [f_0:f_1:f_2]$ is a reduced representation. Hence, $f_1\not= 0$ and $f_2 \neq 0$. Then, the system is equivalent to
\begin{equation*}
    \begin{cases}
        f_0 = 0,\\
        x^2 + \varepsilon_0 = 0,\\
        \varepsilon_2 x^2 + 1 = 0,
    \end{cases}
\end{equation*}
where $x = \frac{f_1}{f_2}$. Under our assumption that 
\begin{equation*}
    \varepsilon_0\varepsilon_2 - 1 \neq 0,
\end{equation*}
the system has no solutions. Therefore, $g$ is well-defined.

Next, we prove that $g$ is non-constant. On the contrary, suppose that $g$ is constant. In the case that $g_0 \equiv 0$, if $g_2 \equiv 0$ then $f_2^{d-2}(\varepsilon_2f_1^2 + f_2^2) \equiv 0$, which implies that $f_2 \equiv 0$ or $f_2 \equiv \lambda f_1$ where $\lambda$ satisfies $\lambda^2 + \varepsilon_2 = 0$. In both cases, $f$ is constant, which is a contradiction. If $g_2 \not\equiv 0$, then there exists a constant $c$ such that $g_1 \equiv c g_2$, which is equivalent to
\begin{equation*}
    f_1^{d-2}(f_1^2+\varepsilon_0f_2^2) \equiv cf_2^{d-2}(\varepsilon_2f_1^2+f_2^2).
\end{equation*}
It implies $f_1 \equiv \lambda f_2$ where $\lambda$ is a constant satisfying $\lambda^{d} + \varepsilon_0\lambda^{d-2} - c(\varepsilon_2\lambda^2 + 1) = 0$. Thus, $f \equiv [0:\lambda:1]$ is constant, a contradiction. If $f_0^d\not\equiv 0$, then there exist constants $c_1,c_2$ such that
\begin{equation*}
    \begin{cases}
        f_1^{d-2}(f_1^2+\varepsilon_0f_2^2) &\equiv c_1f_0^d,\\
        f_2^{d-2}(\varepsilon_1f_0^2+\varepsilon_2f_1^2+f_2^2) &\equiv c_2f_0^d.
    \end{cases} 
\end{equation*}
If $c_1 = 0$, then $f_1\equiv 0$ or $f_1 \equiv \lambda f_2$ where $\lambda$ satisfies $\lambda^2 + \varepsilon_0 = 0$. In both cases, together with the second equation, $f$ is constant, which is a contradiction. If $c_1\neq 0$, the first equation implies that the image of $f$ is contained in the curve
\begin{equation*}
    C = \{-c_1z_0^d + z_1^{d-2}(z_1^2+\varepsilon_0z_2^2) = 0\}.
\end{equation*}
However, by Lemma \ref{theorem: genus 2}, this curve is hyperbolic with $d\geq 11$, which contradicts the assumption that $f$ is non-constant. Thus, the claim is proved; one has $T_g(r) = dT_f(r) + O(1)$.

Let $\{H_i\}_{0\leq i\leq 3}$ be four projective lines in $\mathbb{CP}^2$ where
\begin{align*}
    H_i&=\{z_i=0\}\qquad(0\leq i\leq 2),\\
    H_{3}&=\{\sum_{i=0}^{2}z_i=0\},
\end{align*}
which is in general position in $\mathbb{CP}^2$. First, we consider the case that $g$ is linearly non-degenerate. Applying Cartan's Second Main Theorem for $g$ and the family $\{H_i\}_{0\leq i\leq 3}$, we have
\begin{equation} 
    \label{ineq: non-degenerate}
    T_g(r) \leq \sum_{i=0}^{3} N^{[2]}_g(r,H_i) + S_g(r) \qquad \|.
\end{equation}
One has estimates:
\begin{align*}
    N^{[2]}_g(r,H_0) &= \min\{2,d\}\,N^{[1]}_g(r,H_0) = \min\{2,d\}\,N^{[1]}_f(r,H_0) \\
    &\leq 2N_f(r,H_0)\overset{FMT}{\leq} 2T_f(r)+O(1),
\end{align*}
and
\begin{align*}
    N^{[2]}_g(r,H_1) &= \min\{2,d-2\}\,N^{[1]}_f(r,H_1) + N^{[2]}_f(r,R_1) \\
    &\leq 2N_f(r,H_1)+N_f(r,R_1)\\
    &\hspace{-6pt}\overset{FMT}{\leq} 4T_f(r)+O(1),
\end{align*}
where $R_1$ is the conic defined by the equation $z_1^2+\varepsilon_0z_2^2=0$. Similarly, we have
\begin{align*}
    N^{[2]}_g(r,H_2) &= \min\{2,d-2\}\,N^{[1]}_f(r,H_2) + N^{[2]}_f(r,R_2) \\
    &\leq 2N_f(r,H_2)+N_f(r,R_2)\\
    &\hspace{-6pt}\overset{FMT}{\leq} 4T_f(r)+O(1),
\end{align*}
where $R_2$ is the conic defined by the equation $\varepsilon_1z_0^2+\varepsilon_2z_1^2+z_2^2=0$. Moreover, we have
\begin{equation*}
    N^{[2]}_g(r,H_3) = N^{[2]}_f(r,\mathcal{C}).
\end{equation*}
Then, the estimate \eqref{ineq: non-degenerate} becomes
\begin{equation*}
    (d-10)T_f(r) \leq N^{[2]}_f(r,\mathcal{C}) + S_f(r) \qquad \|.
\end{equation*}

Next, we consider the case that $g$ is linearly degenerate. Since $f(\mathbb{C})\not\subset\mathscr{C}$, $g(\mathbb{C})$ cannot be contained in the line $H_3$. We consider the case that $g(\mathbb{C})$ belongs to a coordinate line, which leads to the following three subcases:
    \begin{enumerate}[label=\roman*.]
        \item $g(\mathbb{C})\subset H_0\cong\mathbb{CP}^1$. Applying Nevanlinna's Second Main Theorem for $g$ and three distinct points $H_0\cap H_i$ ($1\leq i\leq 3)$, we obtain
    \begin{equation}
        \label{ineq: degenerate H_0}
        T_g(r) \leq \sum_{i=1}^{3}N^{[1]}_g(r,H_i\cap H_0) + S_g(r) \qquad \|.
    \end{equation}
    As above, we have estimates:
    \begin{align*}
        N^{[1]}_g(r,H_1\cap H_0) &\leq N^{[1]}_g(r,H_1) = N^{[1]}_f(r,H_1) + N^{[1]}_f(r,R_1) \\
            &\leq N_f(r,H_1) + N_f(r,R_1)\overset{FMT}{\leq} 3T_f(r)+O(1),\\
        N^{[1]}_g(r,H_2\cap H_0) &\leq N^{[1]}_g(r,H_2) = N^{[1]}_f(r,H_2) + N^{[1]}_f(r,R_2) \\
            &\leq N_f(r,H_2) + N_f(r,R_2)\overset{FMT}{\leq} 3T_f(r)+O(1),\\
        N^{[1]}_g(r,H_3\cap H_0) &\leq N^{[1]}_g(r,H_3) = N^{[1]}_f(r,\mathcal{C}).
    \end{align*}
    Thus, the estimate \eqref{ineq: degenerate H_0} becomes
    \begin{equation*}
        (d-6)T_f(r) \leq N^{[1]}_f(r,\mathcal{C}) + S_f(r)\qquad\|.
    \end{equation*}
    \item $g(\mathbb{C})\subset H_1\cong\mathbb{CP}^1$\label{subcase H1}. Applying Nevanlinna's Second Main Theorem for $g$ and three distinct points $H_1\cap H_i$ ($i\neq 1$), we obtain
    \begin{equation}
        \label{ineq: degenerate H_1}
        T_g(r) \leq \sum_{i\neq 1}N^{[1]}_g(r,H_i\cap H_1) + S_g(r)\qquad\|.
    \end{equation}
    Similarly, we have:
    \begin{align*}
        N^{[1]}_g(r,H_0\cap H_1) &\leq N^{[1]}_g(r,H_0) = N^{[1]}_f(r,H_0)\\
        &\leq N_f(r,H_0)\overset{FMT}{\leq} T_f(r)+O(1),\\
        N^{[1]}_g(r,H_2\cap H_1) &\leq N^{[1]}_g(r,H_2) \leq 3T_f(r)+O(1),\\
        N^{[1]}_g(r,H_3\cap H_1) &\leq N^{[1]}_g(r,H_3) = N^{[1]}_f(r,\mathcal{C}).
    \end{align*}
    Thus, the estimate \eqref{ineq: degenerate H_1} becomes
    \begin{equation*}
        (d-4)T_f(r) \leq N^{[1]}_f(r,\mathcal{C}) + S_f(r) \qquad \|.
    \end{equation*}
    Then, we obtain the desired estimate.
    \item $g(\mathbb{C}) \subset H_2 \cong \mathbb{CP}^1$. By symmetry, this subcase can be treated similarly to the subcase  \ref{subcase H1}.
\end{enumerate}

  Finally, we consider the case that $g(\mathbb{C})\subset L$ which is not a coordinate line. We claim that $L$ intersects $\{H_i\}_{0\leq i \leq 3}$ at more than two distinct points. On the contrary, suppose that $L$ intersects $\{H_i\}_{0\leq i \leq 2}$ at exactly two distinct points. Then, we have three subcases:
\begin{enumerate}[label = \roman*.]
  \item\label{subcase:3i}
    $L$ passes through $H_0 \cap H_2$ and $H_1 \cap H_3$.  Hence, $L=\{z_0+z_2=0\}.$ Since $g(\mathbb{C}) \subset L$ then $f(\mathbb{C}) \subset C$ defined by the polynomial $$z_0^d+z_2^{d-2}(\varepsilon_1z_0^2+\varepsilon_2z_1^2+z_2^2) = 0.$$ By Lemma \ref{theorem: genus 1}, this curve is hyperbolic under our assumption. It implies that $f$ must be constant, which is a contradiction.
    
  \item $L$ passes through $H_0 \cap H_1$ and $H_2 \cap H_3$.  Hence, $L=\{z_0+z_1 = 0\}.$ Since $g(\mathbb{C}) \subset L$, we have $f(\mathbb{C}) \subset C$ defined by the polynomial $$ z_0^d + z_1^{d-2}(z_1^2 + \varepsilon_0z_2^2 ) = 0. $$ By Lemma \ref{theorem: genus 2}, this curve is hyperbolic under our assumption. It implies that $f$ must be constant, which is a contradiction.
  
  \item $L$ passes through $H_0 \cap H_3$ and $H_1 \cap H_2$.  Hence, $L=\{z_1+z_2 = 0\}$. Since $g(\mathbb{C}) \subset L$ then $f(\mathbb{C}) \subset C$ defined by the polynomial $$z_1^{d-2}(z_1^2 + \varepsilon_0z_2^2)+z_2^{d-2}(\varepsilon_1z_0^2+\varepsilon_2z_1^2+z_2^2) = 0.$$ By Lemma \ref{theorem: genus 3}, this curve is hyperbolic under our assumption. It implies that $f$ must be constant, which is a contradiction.
\end{enumerate}

In the case that $L$ intersects $\{H_i\}_{0\leq i \leq 2}$ at three distinct points, by applying Nevanlinna’s Second Main Theorem for \( g \) and the three points \( L \cap H_i \)
(\( 0 \le i \le 2 \)), we obtain
\begin{align*}
  \label{ineq: degenerate L}
  T_g(r) &\le \sum_{i=0}^{2} N_g^{[1]}(r, L \cap H_i) + S_g(r) \qquad \|\\
  &\leq \sum_{i=0}^{2} N_g^{[1]}(r, H_i) + S_g(r) \qquad \|,
\end{align*}
which is equivalent to
\begin{equation}
    dT_f(r) \leq 7T_f(r) + S_f(r) \qquad \|.
\end{equation}
Since $d\geq 11$, this is a contradiction. Hence, this subcase cannot occur.

  In the case that $L$ intersects $\{H_i\}_{0\leq i \leq 2}$ at exactly two distinct points and intersects $H_3$ at another point, there are three possible subcases to consider:
\begin{enumerate}[label = \roman*.]
    \item \label{subcase:ii} $L$ passes through $H_0\cap H_1$ and intersects $H_2$, $H_3$ at two distinct points. Applying Nevanlinna's Second Main Theorem for $g$ and the three points $H_0\cap H_1$, $L\cap H_2$, $L\cap H_3$, we have
    \begin{align*}
        T_g(r) &\leq N^{[1]}_g(r,H_0\cap H_1) + N^{[1]}_g(r,L\cap H_2) + N^{[1]}_g(r,L\cap H_3) + S_g(r) \qquad \| \\
         &\leq N_g^{[1]}(r,H_0) + N_g^{[1]}(r,H_2) + N_f^{[1]}(r,\mathcal{C}) + S_g(r) \qquad \|,
    \end{align*}
    which is equivalent to
    \begin{equation*}
        dT_f(r) \leq 4T_f(r) + N_f^{[1]}(r,\mathcal{C}) + S_f(r)\qquad \|.
    \end{equation*}
    Then, the desired estimate follows.    
    \item $L$ passes through $H_0\cap H_2$ and intersects $H_1$, $H_3$ at two distinct points. By symmetry, this subcase can be treated similarly to the subcase \ref{subcase:ii}.
    \item $L$ passes through $H_1 \cap H_2$ and intersects $H_0,H_3$ at two distinct points. Then, by applying Nevanlinna's Second Main Theorem for $g$ and the three points $H_1\cap H_2$, $L\cap H_0$, $L\cap H_3$, we have
    \begin{align*}
        T_g(r) &\leq N^{[1]}_g(r,H_1\cap H_2) + N^{[1]}_g(r,L\cap H_0) + N^{[1]}_g(r,L\cap H_3) + S_g(r) \qquad \|\\
         & \leq N_g^{[1]}(r,H_1) + N_g^{[1]}(r,H_0) + N_f^{[1]}(r,\mathcal{C}) + S_g(r) \qquad \|,
    \end{align*}
    which is equivalent to
    \begin{equation*}
        dT_f(r) \leq 4T_f(r) + N_f^{[1]}(r,\mathcal{C}) + S_f(r)\qquad \|.
    \end{equation*}

    The proof is complete.
\end{enumerate}

  %---------------------------------------
  \section*{Acknowledgement}
  \vspace{-0.3cm}
  \noindent This research is funded by the University of Education, Hue University under grant number NCTBSV.T.25 TN - 101 - 02. The authors would like to express their sincere gratitude to Dr. Dinh Tuan Huynh for his valuable guidance and support throughout the research.
  %---------------------------------------
  \bibliographystyle{plain}
  \bibliography{references}
  %---------------------------------------
\end{document}